\newdimen\bibspace
\renewenvironment{thebibliography}[1]{%
 \section*{\refname %or \bibname if you use ``book'' as the documentclass
       \@mkboth{\MakeUppercase\refname}{\MakeUppercase\refname}}%
     \list{\@biblabel{\@arabic\c@enumiv}}%
          {\settowidth\labelwidth{\@biblabel{#1}}%
           \leftmargin\labelwidth
           \advance\leftmargin\labelsep
           \itemsep\bibspace
           \parsep\z@skip     %
           \@openbib@code
           \usecounter{enumiv}%
           \let\p@enumiv\@empty
           \renewcommand\theenumiv{\@arabic\c@enumiv}}%
     \sloppy\clubpenalty4000\widowpenalty4000%
     \sfcode`\.\@m}
    {\def\@noitemerr
      {\@latex@warning{Empty `thebibliography' environment}}%
     \endlist}
\numberwithin{equation}{section}
\newtheorem{thm}{Theorem}[section]
\newtheorem{rem}[thm]{Remark}
\newcommand{\al}{\alpha}
 \newcommand{\lda}{\lambda}
\newcommand{\ud}{\mathrm{d}}
\newcommand{\om}{\Omega}
\newcommand{\R}{\mathbb{R}}
\newcommand{\be}{\begin{equation}}      \newcommand{\ee}{\end{equation}}
\newcommand{\ul}{\underline}
\newcommand{\ol}{\overline}
\begin{document}

\title{\textbf{Existence of entire solutions of Monge-Amp\`ere equations with prescribed asymptotic behaviors} \bigskip}

\author{Jiguang Bao, \ \ Jingang Xiong, \ \ Ziwei Zhou}
\date{}

\maketitle

\begin{abstract} We prove the existence of entire solutions of the Monge-Amp\`ere equations with prescribed asymptotic behavior at infinity of the plane, which was left by Caffarelli-Li in 2003. The special difficulty of the problem in dimension two is due to the \textit{global logarithmic term} in the asymptotic expansion of solutions at infinity. Furthermore, we give a PDE proof of the characterization of the space of solutions of the Monge-Amp\`ere equation $\det \nabla^2 u=1$  with $k\ge 2$ singular points, which was established by G\'alvez-Mart\'inez-Mira in 2005. We also obtain the existence in higher dimensional cases with general right hand sides. \\[1mm]
 {\textbf{Keywords:}} Monge-Amp\`ere equation, Entire solutions, Asymptotics
\end{abstract}

\section{Introduction}

In 1954, K. J\"orgens \cite{J1} proved that, modulo the unimodular affine equivalence, $\frac{1}{2}|x|^2$ is the unique convex smooth solution of
\[
\det \nabla^2 u=1 \quad \mbox{in }\R^2.
 \]
J\"orgens theorem was extended to smooth convex solutions in higher dimensions by Calabi \cite{Cal} for less than or equal to 5 dimensions and by Pogorelov \cite{P} for all dimensions. Different proofs were given by  Cheng-Yau \cite{CY}, Caffarelli \cite{Ca} and Jost-Xin \cite{JoXi}. In dimension two, elementary and simpler proofs were found by Nitsche \cite{N} and Jin-Xiong \cite{JX1}.

In \cite{CL}, Caffarelli and Li established a quantitative version of the theorem of J\"orgens-Calabi-Pogorelov. They considered
\begin{equation} \label{eq:ma-1}
\det \nabla^2 u=f \quad \mbox{in }\R^n,
\end{equation}
where $f\in C^0(\R^n)$ satisfies that
\begin{equation} \label{eq:c-1}
\begin{split}
0<\inf_{\R^n} f\le \sup_{\R^n} f<\infty~\mbox{and supp}(f-1) \mbox{ is bounded}.
\end{split}
\end{equation}

Denote
$$\mathcal{A}:=\{A:~A~\text{is a symmetric, positive definite $n\times n$ matrix and}~\det{A}=1\}.$$

\begin{thm}[Caffarelli-Li \cite{CL}]  Let $u$ be a convex viscosity (Alexsandrov) solution of \eqref{eq:ma-1} with $f$ satisfying \eqref{eq:c-1}. Then $u\in C^\infty(\R^n\setminus supp(f-1))$, and we have the following:
\begin{itemize}
\item[-] For $n\ge 3$, there exist a linear function $\ell(x)$ and $A\in \mathcal{A}$ such that
\be \label{eq:c-2}
\limsup_{x\to \infty }|x|^{n-2}\big|u(x)-(\frac{1}{2} x^t Ax+\ell(x))\big|<\infty.
\ee
\item[-] For $n=2$, there exist a linear function $\ell(x)$ and $A\in \mathcal{A}$ such that
\be \label{eq:c-3}
\limsup_{x\to \infty }|x|\big|u(x)-(\frac{1}{2} x^t Ax+d\ln\sqrt{x^t Ax}+\ell(x))\big|<\infty,
\ee
where
\be \label{eq:c-4}
d= \frac{1}{2\pi} \int_{\R^2}(f-1)\,\ud x.
\ee
\end{itemize}
\end{thm}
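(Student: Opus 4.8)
The plan has four stages: dispose of the regularity claim; reduce the leading‑order asymptotics to the J\"orgens--Calabi--Pogorelov theorem by a blow‑down; analyze the linearized Monge--Amp\`ere operator in an exterior domain to extract the full expansion; and finally identify $d$ by a flux computation. The regularity is immediate from known theory: on $\R^n\setminus\mathrm{supp}(f-1)$ one has $\det\nabla^2 u=1$, and since $u$ is a convex Alexandrov solution of \eqref{eq:ma-1} with $0<\inf_{\R^n}f\le\sup_{\R^n}f<\infty$, Caffarelli's strict convexity and interior regularity theory give $u\in C^{1,\alpha}_{\mathrm{loc}}$ there; the interior $W^{2,p}$ estimate, the Evans--Krylov theorem, and Schauder bootstrapping then upgrade this to $u\in C^\infty(\R^n\setminus\mathrm{supp}(f-1))$. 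So the content is the asymptotic behavior at infinity.

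For the asymptotic cone I would first record that $u$ grows exactly quadratically --- a standard consequence of convexity together with $\inf f>0$ and $\sup f<\infty$ via Caffarelli's estimates on sections. Then consider the blow‑downs $u_R(x):=R^{-2}u(Rx)$; these are convex with $\det\nabla^2 u_R(x)=f(Rx)$, and by the quadratic bounds they subconverge locally uniformly to a convex function $u_\infty$. Since $\int_{B_\rho}f(Rx)\,\ud x\to|B_\rho|$ for every $\rho>0$, the Monge--Amp\`ere measures of the $u_R$ carry no atom at the origin in the limit, so $u_\infty$ is a convex generalized solution of $\det\nabla^2 u_\infty=1$ on all of $\R^n$; by Caffarelli's regularity $u_\infty$ is in fact smooth, hence by J\"orgens--Calabi--Pogorelov $u_\infty(x)=\tfrac12 x^tAx$ for some $A\in\mathcal{A}$ (the affine part vanishes because $u_R(0)\to0$ and $\partial u_R(0)\to\{0\}$). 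Replacing $u(x)$ by $u(A^{-1/2}x)$ I may assume $A=I$, so that $u(x)=\tfrac12|x|^2+o(|x|^2)$ and, by interior $C^2$ estimates on dyadic annuli, $\nabla^2 u(x)\to I$ as $x\to\infty$.

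The heart of the matter --- and the step I expect to be the genuine obstacle, particularly when $n=2$ --- is to sharpen this. Fix $R_0$ with $\mathrm{supp}(f-1)\subset B_{R_0}$ and, after enlarging $R_0$, with $\tfrac12 I\le\nabla^2 u\le 2I$ on $\R^n\setminus B_{R_0}$. Setting $w:=u-\tfrac12|x|^2$ and using concavity of $\log\det$, on $\R^n\setminus B_{R_0}$ one obtains $a^{ij}(x)D_{ij}w=0$ with $a^{ij}(x):=\int_0^1[(I+t\nabla^2 w)^{-1}]^{ij}\,\ud t$ uniformly elliptic and $a^{ij}(x)\to\delta^{ij}$; thus $w$ solves a uniformly elliptic linear equation with coefficients continuous at infinity, and $w=o(|x|^2)$. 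I would then bootstrap: rescaling $w$ on the annuli $\{|x|\sim 2^k\}$ and feeding the equation into interior Schauder estimates (equivalently, a Liouville argument for the linearized operator) improves $\nabla^2 w\to0$ to the sharp decay $|D^2 w(x)|=O(|x|^{-n})$ for $n\ge3$ (resp.\ $O(|x|^{-2})$ for $n=2$), whence $a^{ij}(x)-\delta^{ij}=O(|x|^{-n})$. With this control I would then trap $u$, via the comparison principle for the Monge--Amp\`ere operator, between super‑ and subsolutions built from radial exterior solutions of $\det\nabla^2 U=1$ --- which have the form $\tfrac12 r^2+\beta+O(r^{2-n})$ when $n\ge3$, and $\tfrac12 r^2+\gamma\ln r+\beta+O(r^{-2})$ when $n=2$, with $\beta,\gamma$ free constants --- tilted by suitable linear functions. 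This yields a linear $\ell$ (and, for $n=2$, a constant $d$) with $|u-(\tfrac12|x|^2+\ell)|=O(|x|^{2-n})$ for $n\ge3$ and $|u-(\tfrac12|x|^2+d\ln|x|+\ell)|=O(|x|^{-1})$ for $n=2$; undoing the normalization $A=I$ converts $\ln|x|$ into $\ln\sqrt{x^tAx}$, giving \eqref{eq:c-2} and \eqref{eq:c-3}. What makes dimension two genuinely harder is precisely that the radial exterior profile already contains the non‑decaying term $\gamma\ln r$: its coefficient is an extra free parameter that the barrier construction must be allowed to select, so the matching cannot be closed "up to a constant" as in $n\ge3$.

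Finally, to identify $d$ I would use a flux (image‑area) argument. For $R>R_0$, since $\det\nabla^2 u\equiv 1$ off $B_{R_0}$, $\nabla u$ is injective on $B_R$, and $\det\nabla^2 u\,\ud x$ is the Monge--Amp\`ere measure,
\[
|\nabla u(B_R)|-|B_R|=\int_{B_R}(\det\nabla^2 u-1)\,\ud x=\int_{B_{R_0}}(f-1)\,\ud x=\int_{\R^2}(f-1)\,\ud x .
\]
On the other hand, differentiating the expansion just obtained (legitimate by interior gradient estimates) gives $\nabla u(x)=x+d\,x/|x|^2+\nabla\ell+O(|x|^{-2})$ on $|x|=R$ (with $A=I$), so $\nabla u(\partial B_R)$ bounds a region of area $\pi\big(R+d/R\big)^2+o(1)=|B_R|+2\pi d+o(1)$. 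Comparing the two displays yields $d=\tfrac{1}{2\pi}\int_{\R^2}(f-1)\,\ud x$, which is \eqref{eq:c-4}; since this quantity is affine‑invariant the formula also holds before the reduction to $A=I$.
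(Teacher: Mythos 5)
This theorem is not proved in the paper: it is quoted from Caffarelli--Li \cite{CL} (with the two--dimensional exterior asymptotics going back to \cite{FMM}, and the refined version used later taken from \cite{BLZ}), so the only point of contact with the paper's own arguments is your final identification of $d$: your image--area/flux computation is essentially the same divergence--structure computation the paper performs in Section 2 to show $D=d$, and that part of your plan, like the regularity reduction at the beginning, is sound.

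The genuine gap is in the central analytic step. First, the blow--down only gives you, along subsequences, $R^{-2}u(Rx)\to\frac12x^tAx$; you then treat the limit as unique, but nothing in your sketch excludes different subsequences producing different matrices $A$, and without that you cannot even assert the qualitative convergence $\nabla^2u(x)\to I$ after normalization. Second, and more seriously, the upgrade from ``$a^{ij}(x)\to\delta^{ij}$ and $w=o(|x|^2)$'' to the quantitative decay $|D^2w(x)|=O(|x|^{-n})$ (resp.\ $O(|x|^{-2})$ for $n=2$) is asserted rather than proved, and the tools you invoke cannot deliver it as stated: interior Schauder estimates on rescaled annuli need a H\"older/Dini modulus of the coefficients \emph{with a rate}, so with coefficients that are merely continuous at infinity they yield uniform bounds, not decay in $|x|$; and a Liouville theorem for the limit operator gives rigidity of entire solutions, not a rate for $w$. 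This is precisely where Caffarelli--Li spend most of their effort: one must first establish some small polynomial rate $|D^2u(x)-I|\le C|x|^{-\varepsilon}$ (via an iteration of affine renormalizations of sections combined with Caffarelli's interior $W^{2,p}$/$C^{2,\alpha}$ theory), and only then run a second iteration in which the improved coefficient decay --- which is itself controlled by $D^2w$ --- feeds back into the linearized equation to reach the sharp rate, after which the exterior harmonic expansion produces $\ell$, the constant, and in two dimensions the $d\ln|x|$ term. Your proposal correctly flags this step as the obstacle, but as written it is a placeholder; in dimension two the known ways to close it are the complex--analytic argument of \cite{FMM}/\cite{GMM} or the PDE iteration of \cite{BLZ}, and your barrier matching (with the free logarithmic coefficient) would have to be built on top of one of those quantitative decay estimates rather than replace them.
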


The asymptotic behaviors in exterior domains of dimension two had been established by Ferrer-Mart\'inez-Mil\'an \cite{FMM}.

In addition, Caffarelli-Li \cite{CL} proved that \eqref{eq:ma-1} with the condition \eqref{eq:c-2} admits a unique viscosity solution when $n\ge 3$; see Theorem 1.7 of \cite{CL}.  However, it was not known whether \eqref{eq:ma-1} with the condition \eqref{eq:c-3} has a unique solutions in the plane. The difficulty stems from the global constant $d$ in \eqref{eq:c-3}, which makes it hard to construct sub- and supper- solutions with quadratic growth.   In this paper,  we answer the problem positively by a different method.

In fact, we can relax the assumption on $f$. Let $\nu$ be a locally finite Borel measure defined in $\R^2$ and $\ud \nu=f\,\ud x$ in $\R^n\setminus \om$, where $\om$ is a bounded open set and $f\in C^3(\R^n\setminus \om)$ is positive function satisfying
\be \label{eq:c-5}
\limsup_{|x|\to\infty}|x|^{\beta+j}|\nabla^j(f(x)-1)|<\infty, \quad j=0,1,2,3,
\ee
for some $\beta>2$. By Corollary 1.1 in \cite{BLZ} that for every Alexsandrov solution of
\be \label{eq:ma-2}
\det \nabla^2 u=\nu \quad \mbox{in }\R^n,
\ee
for $n\ge3$, there exist a linear function $\ell(x)$ and $A\in \mathcal{A}$ such that
\be \label{eq:c-3''}
\limsup_{x\to \infty }|x|^{\min\{\beta,n\}-2+j}\big|\nabla^j(u(x)-(\frac{1}{2} x^t Ax+\ell(x)))\big|<\infty,
\ee
where $j=0,1,2,3,4$; for $n=2$, there exist a linear function $\ell(x)$ and $A\in \mathcal{A}$ such that
\be \label{eq:c-3'}
\limsup_{x\to \infty }|x|^{\sigma+j}\big|\nabla^j(u(x)-(\frac{1}{2} x^t Ax+d\ln\sqrt{x^t Ax}+\ell(x)))\big|<\infty,
\ee
where $j=0,1,2,3,4$, $\sigma\in (0,\min\{\beta-2,2\})$, and similar to the proof of (1.9) in \cite{CL},
\be \label{eq:c-6}
d= \frac{1}{2\pi} \lim_{R\to \infty} \Big(\int_{B_R}\,\ud \nu -\pi R^2\Big).
\ee

The main result of this paper is
\begin{thm} \label{thm:main} Let $n=2$ and $\nu$ be as above. For any linear function $\ell$ and $A\in \mathcal{A}$,  the Monge-Amp\`ere equation  \eqref{eq:ma-2}
 has a unique Alexsandrov solution satisfying \eqref{eq:c-3'} with $d$ given by \eqref{eq:c-6}.

\end{thm}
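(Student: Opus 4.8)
The plan is to obtain the solution as a monotone limit of solutions of Dirichlet problems on expanding balls, trapped between a sub- and a super-solution that already carry the logarithmic term, to read off \eqref{eq:c-3'} from \cite{BLZ}, and to get uniqueness from the Monge--Amp\`ere comparison principle.

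\emph{Step 1 (normalization).} Since \eqref{eq:ma-2} is unchanged by adding a linear function, we may assume $\ell\equiv0$; and since $\det A^{-1/2}=1$, the linear substitution $x\mapsto A^{-1/2}x$ converts \eqref{eq:ma-2} into an equation of the same type for an affine image of $\nu$ (still of the form $f\,\ud x$ outside a bounded set, with the decay \eqref{eq:c-5}), converts $\frac12 x^tAx$ into $\frac12|x|^2$, and leaves $d$ in \eqref{eq:c-6} unchanged because determinant-one maps preserve areas of the exhausting sets. So it suffices to produce a unique convex Alexandrov solution of $\det\nabla^2u=\nu$ with $u(x)=\frac12|x|^2+d\ln|x|+O(|x|^{-\sigma})$ and the accompanying derivative bounds.

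\emph{Step 2 (existence, granting barriers).} Fix $R_1$ with $\overline{\om}\subset B_{R_1}$, so $\nu=f\,\ud x$ on $\{|x|>R_1\}$. Suppose we have a convex subsolution $\underline u$ and a convex supersolution $\bar u$ on $\R^2$ --- $M\underline u\ge\nu$, $M\bar u\le\nu$ as measures --- with $\underline u\le\bar u$ and $\underline u,\bar u=\frac12|x|^2+d\ln|x|+O(|x|^{-\sigma})$ as $|x|\to\infty$. For $R>R_1$ let $u_R$ solve (Alexandrov) $\det\nabla^2u_R=\nu$ in $B_R$ with $u_R=\bar u$ on $\partial B_R$. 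The comparison principle gives $\underline u\le u_R\le\bar u$ in $B_R$, and for $R<R'$ the bound $u_{R'}\le\bar u=u_R$ on $\partial B_R$ forces $u_{R'}\le u_R$ in $B_R$; hence $u_R$ decreases, locally uniformly (by the equi-Lipschitz bounds for convex functions), to a convex $u$ with $\underline u\le u\le\bar u$. The Monge--Amp\`ere measure being weakly continuous along such limits and $Mu_R=\nu|_{B_R}$, we get $\det\nabla^2u=\nu$ on $\R^2$, and the sandwich gives $\bigl|u-\tfrac12|x|^2-d\ln|x|\bigr|\le C$ for large $|x|$.

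\emph{Step 3 (the barriers --- the main obstacle).} On $\{|x|>R_1\}$ the radial functions $\frac12|x|^2+d\ln|x|\mp C|x|^{-\alpha}$, $\alpha:=\min\{\beta-2,2\}$, are for $C$ large a super- resp.\ sub-solution; the difficulty, as noted in the introduction, is to continue them to \emph{global} sub/super-solutions. The obstruction is quantitative: the area formula for the gradient map forces the Monge--Amp\`ere mass over $B_{R_1}$ of any convex function asymptotic to $\frac12|x|^2+d\ln|x|$ to be comparable to $\pi R_1^2+2\pi d$, and whether this is $\le$ or $\ge\nu(B_{R_1})=\pi R_1^2+2\pi d-\int_{|x|>R_1}(f-1)\,\ud x$ is exactly controlled by the correction term --- that is, by the quantity appearing in \eqref{eq:c-6}. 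One therefore takes the exterior part of $\bar u$ to coincide, up to $O(|x|^{-\sigma})$, with a genuine solution of $\det\nabla^2w=f$ in an exterior domain (available, e.g., by an annular exhaustion, on which this obstruction is absent since the inner boundary data are prescribed, with the asymptotics controlled by \cite{FMM}); by the divergence theorem and \eqref{eq:c-6} the gradient image of $B_{R_1}$ under this exterior model encloses area exactly $\nu(B_{R_1})$, which is precisely the balance that lets one complete $\bar u$ inside $B_{R_1}$ by an interior Monge--Amp\`ere problem with Monge--Amp\`ere measure $\le\nu|_{B_{R_1}}$ matching the exterior gradient on $\partial B_{R_1}$. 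For $\underline u$ one argues similarly, now being free to add Monge--Amp\`ere mass inside $B_{R_1}$. I expect producing a genuine global supersolution carrying the $d\ln|x|$ term to be the crux.

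\emph{Step 4 (asymptotics and uniqueness).} Since $u$ is an Alexandrov solution of \eqref{eq:ma-2}, \cite{BLZ} (Corollary 1.1) gives \eqref{eq:c-3'} for some $A'\in\mathcal A$, linear $\ell'$, and $d'$ as in \eqref{eq:c-6}; the bound $\bigl|u-\tfrac12|x|^2-d\ln|x|\bigr|\le C$ from Step 2 then forces $A'=I$ (otherwise $u-\frac12|x|^2$ would fail to be $o(|x|^2)$), then $\ell'\equiv0$ (a nonzero linear term is unbounded), then $d'=d$. Undoing Step 1 yields a solution with the prescribed $A,\ell,d$. For uniqueness, if $u_1,u_2$ both solve \eqref{eq:ma-2} and satisfy \eqref{eq:c-3'} with the same data, then $|u_1-u_2|\le C_0|x|^{-\sigma}$ for $|x|\ge R_0$; for $R\ge R_0$ the function $u_2+C_0R^{-\sigma}$ also satisfies $\det\nabla^2(u_2+C_0R^{-\sigma})=\nu$ in $B_R$ and dominates $u_1$ on $\partial B_R$, so the comparison principle gives $u_1\le u_2+C_0R^{-\sigma}$ in $B_R$; letting $R\to\infty$ gives $u_1\le u_2$, and by symmetry $u_1=u_2$.
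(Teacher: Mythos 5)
Your overall scheme (global convex sub-/super-solutions carrying the exact term $d\ln|x|$, then a monotone exhaustion by Dirichlet problems and the comparison principle) is a different route from the paper's, but it has a genuine gap exactly where you flag it: Step 3 is not a construction, and it is precisely the difficulty the authors identify as the reason the barrier method of \cite{CL} fails in the plane. Two specific points fail to be established. First, you need an exterior solution of $\det\nabla^2 w=f$ whose logarithmic coefficient is \emph{exactly} the $d$ of \eqref{eq:c-6}; your assertion that ``by the divergence theorem and \eqref{eq:c-6} the gradient image of $B_{R_1}$ under this exterior model encloses area exactly $\nu(B_{R_1})$'' is circular, since the flux identity shows that enclosing area $\nu(B_{R_1})$ is \emph{equivalent} to the log coefficient being $d$, and neither \cite{FMM} (which treats $f\equiv1$ and fixed compact boundary data) nor an unspecified ``annular exhaustion'' gives you control of that coefficient for general $f$ and general $\nu|_{\Omega}$. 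Second, even granting such an exterior piece, the interior completion is a second-boundary-value--type problem: you must produce a convex function on $B_{R_1}$ whose gradient matches the exterior gradient on $\partial B_{R_1}$ (otherwise the glued function either fails to be convex or its Monge--Amp\`ere measure charges the interface, destroying the supersolution inequality) and whose Monge--Amp\`ere measure is $\le\nu|_{B_{R_1}}$. No existence result is invoked or proved for this, and it is not standard. Steps 1, 2 and 4 are fine (indeed Step 4 is easier in your setting, since the two-sided bound with the correct log term forces $A'=I$, $\ell'=0$, $d'=d$ directly), but they all hinge on the unproved barriers.

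For comparison, the paper circumvents the barrier construction entirely: it solves the Dirichlet problems \eqref{di} with the constant boundary data $\frac{R^2}{2}+d\ln R$, and instead of a global supersolution it uses the one-parameter family of radial solutions $w_c$ with a Dirac mass $2\pi c\,\delta_0$ at the origin. A mass-versus-comparison argument (comparing $|\partial(w_c+\lambda_c(R))(B_R)|$ with $\nu(B_R)$ for $c>\bar c=d-\underline d$) yields the lower bound $u_R(0)\ge\lambda_{\bar c}(R)$, whence $u_R\le w_{\bar c}(|x|)+u_R(0)$; after subtracting supporting planes this gives compactness and a limit solution with quadratic growth accurate to $O(|x|)$ only. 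The refined asymptotics then come from \cite{BLZ}, the growth bound forces $A=I$, and the logarithmic coefficient is identified with $d$ not by barriers but by the flux computation $\int_{B_R}\ud\nu=\pi R^2+2\pi D+O(R^{-\sigma})$, which shows that $D=d$ automatically for \emph{any} entire solution with the \cite{BLZ} expansion. If you want to salvage your approach, you would have to supply the exterior existence theory with prescribed log coefficient and the gradient-matching interior completion; as written, the proof is incomplete at its central step.
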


Theorem \ref{thm:main} confirms the Conjecture 1 of \cite{BLZ} particularly. Our proof is very different from the one in \cite{CL} for $n\ge 3$, where sub- and supper- solutions are constructed. By the method of \cite{CL}, we have the following theorem.

\begin{thm}\label{meah} Let $n\ge 3$ and $\nu$ be as above. For any linear function $\ell$ and $A\in\mathcal A$, the Monge-Amp\`ere equation \eqref{eq:ma-2} has a unique Alexandrov solution satisfying \eqref{eq:c-3''}.
\end{thm}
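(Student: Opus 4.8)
The plan is to follow the method of Caffarelli--Li used for Theorem~1.7 of \cite{CL}: construct explicit sub- and super-solutions carrying the prescribed quadratic leading term and the correct decay, solve the Dirichlet problem on large balls, pass to a limit, and then read off the leading behaviour of the limit from the a priori expansion \eqref{eq:c-3''} supplied by \cite{BLZ}. First I would normalize $(A,\ell)$. Writing $A=S^2$ with $S:=A^{1/2}$ symmetric positive definite (so $\det S=1$) and $T:=S^{-1}$, the affine substitution $\hat u(y):=u(Ty)-\ell(Ty)$ leaves the operator $\det\nabla^2$ invariant since $\det T=1$; it replaces $\nu$ by its push-forward under $S$, which again equals $\hat f\,\ud y$ off the bounded set $S\,\om$ with $\hat f=f\circ T$ satisfying \eqref{eq:c-5} with the same $\beta$; and it turns \eqref{eq:c-3''} for $(A,\ell)$ into the same estimate for $(I,0)$. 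Hence it suffices to produce a unique Alexandrov solution of $\det\nabla^2u=\nu$ in $\R^n$ with $\limsup_{x\to\infty}|x|^{\min\{\beta,n\}-2+j}\,|\nabla^j(u(x)-\tfrac12|x|^2)|<\infty$ for $j=0,\dots,4$.

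The heart of the argument is the construction of a convex supersolution $\overline w$ and a convex subsolution $\underline w$ on $\R^n$, with $\det\nabla^2\overline w\le\nu\le\det\nabla^2\underline w$ as Borel measures and $\overline w(x)-\tfrac12|x|^2\to 0$, $\underline w(x)-\tfrac12|x|^2\to0$ as $|x|\to\infty$ at the rate in \eqref{eq:c-3''}. Fix $R_1$ large enough that $\overline\om\subset B_{R_1/2}$. Outside $B_{R_1}$ I take $\overline w$ and $\underline w$ radial, with profiles $g_+,g_-$ solving the one-dimensional Monge--Amp\`ere equation $g''(r)(g'(r)/r)^{n-1}=h(r)$, with $h=\underline f(r):=\inf_{|x|=r}f$ for $\overline w$ and $h=\overline f(r):=\sup_{|x|=r}f$ for $\underline w$; since $\underline f,\overline f$ lie between two positive constants and tend to $1$ with the rate of \eqref{eq:c-5}, and since $n\ge3$ and $\beta>2$, the integrals $\int^r h(s)s^{n-1}\,\ud s$ converge fast enough that the two constants of integration can be chosen so that $g_\pm(r)-\tfrac12 r^2=O(r^{2-\min\{\beta,n\}})$; here one uses essentially that there is no global logarithmic term, which is precisely the feature absent in the $n=2$ case treated by Theorem~\ref{thm:main}. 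Inside $B_{R_1}$ I take $\overline w$ to be constant (so that $\det\nabla^2\overline w=0\le\nu$ there, and the join at $\partial B_{R_1}$ is $C^1$ with vanishing gradient, hence adds no mass), and I take $\underline w$ to be the Alexandrov solution $v$ of $\det\nabla^2v=\nu$ in $B_{R_1}$ with $v\equiv g_-(R_1)$ on $\partial B_{R_1}$; the Alexandrov maximum principle bounds the interior oscillation of $v$ by a constant $C(n,R_1,\nu(\overline\om))$, so after choosing the radial slope $g_-'(R_1)$ large enough $v$ lies above every supporting plane of the outer radial profile, the two pieces glue into a globally convex function, and the crease along $\partial B_{R_1}$ only adds nonnegative mass, which is still $\ge\nu$ there since $\nu$ charges no sphere in $\R^n\setminus\overline\om$. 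I expect this barrier construction --- convexity of the glued functions and the correct sign of the Monge--Amp\`ere measure along the gluing sphere --- to be the main technical obstacle.

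With $\overline w,\underline w$ at hand, for each $R$ with $B_{R_1}\subset B_R$ let $u_R\in C(\overline{B_R})$ be the convex Alexandrov solution of $\det\nabla^2u_R=\nu$ in $B_R$ with $u_R=\tfrac12|x|^2$ on $\partial B_R$. Since $\det\nabla^2\overline w\le\nu=\det\nabla^2u_R\le\det\nabla^2\underline w$ in $B_R$ and $\overline w,\underline w$ differ from $\tfrac12|x|^2$ on $\partial B_R$ by constants $e_+(R),e_-(R)\to0$, the comparison principle gives $\underline w(x)-e_-(R)\le u_R(x)\le\overline w(x)-e_+(R)$ on $\overline{B_R}$; in particular $u_R$ is bounded uniformly in $R$ on each fixed ball, hence (convexity) uniformly Lipschitz there, so along a subsequence $u_R\to u$ locally uniformly, with $u$ convex, $\det\nabla^2u=\nu$ in $\R^n$ by stability of the Monge--Amp\`ere measure, and $\underline w\le u\le\overline w$ after passing to the limit, whence $u(x)-\tfrac12|x|^2\to0$. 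Applying \eqref{eq:c-3''} to the Alexandrov solution $u$ yields $A^*\in\mathcal A$ and a linear $\ell^*$ with $u(x)=\tfrac12x^tA^*x+\ell^*(x)+O(|x|^{2-\min\{\beta,n\}})$; comparing with $u(x)-\tfrac12|x|^2\to0$ forces the polynomial $\tfrac12x^t(A^*-I)x+\ell^*(x)$ to vanish at infinity, so $A^*=I$ and $\ell^*\equiv0$, and $u$ satisfies the normalized form of \eqref{eq:c-3''}; undoing the substitution gives the desired solution for $(A,\ell)$. Finally, for uniqueness: if $u_1,u_2$ both solve \eqref{eq:ma-2} and satisfy \eqref{eq:c-3''} with the same $(A,\ell)$, then $u_1-u_2=O(|x|^{2-\min\{\beta,n\}})\to0$, so $\va_R:=\sup_{\partial B_R}|u_1-u_2|\to0$; since $\det\nabla^2u_1=\det\nabla^2u_2=\nu$ in $B_R$, the comparison principle gives $\sup_{B_R}|u_1-u_2|=\va_R$, and letting $R\to\infty$ yields $u_1\equiv u_2$.
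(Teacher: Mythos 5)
Your overall route is the same as the paper's: normalize $(A,\ell)$ by an affine change, build a radial-outside supersolution from $\inf_{|x|=r}f$ and a radial-outside subsolution from $\sup_{|x|=r}f$ glued to an inner piece across a sphere, trap the Dirichlet solutions $u_R$ on $B_R$ between them by the comparison principle, pass to a locally uniform limit, invoke the expansion of \cite{BLZ} to force $A^*=I$ and $\ell^*$ trivial, and get uniqueness from the comparison principle. All of those steps are sound as written. The one genuine gap is exactly at the point you flagged: the convexity of the glued subsolution. You argue that since Alexandrov's maximum principle bounds the oscillation of the inner piece $v$ (the solution of $\det\nabla^2v=\nu$ in $B_{R_1}$ with constant boundary data), choosing the outer radial slope $g_-'(R_1)$ large makes $v$ lie above every supporting plane of the outer profile at $\partial B_{R_1}$. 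That inference is false in general: an oscillation bound does not control the inward normal derivative of a convex function at the boundary. A convex function behaving like $g_-(R_1)-\sqrt{R_1-|x|}$ near the sphere has arbitrarily small oscillation but infinite inward slope, and then for \emph{no} finite choice of the outer slope does it stay above the supporting planes of the outer profile in a thin cap near each boundary point, so the glue fails to be convex. What you actually need is a boundary gradient estimate for $v$ on $\partial B_{R_1}$, i.e., a bound on its inward normal derivative in terms of the data; this follows from the oscillation bound \emph{together with} a comparison against a radial lower barrier in the annulus $B_{R_1}\setminus B_{R_1/2}$, where $\nu=f\,\ud x$ has bounded density because $\overline\om\subset B_{R_1/2}$.

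This missing barrier is precisely what the paper's auxiliary construction supplies: the radial function $v_2$ built from $\bar f(r)=\max_{|x|=r}f$, compared with the inner solution $v_1$ in the annulus (the bump $a\eta$ added to the right-hand side of $v_1$ serves only to make $K=c_0/c_1\ge1$, so that $\det\nabla^2 v_2=K^n\bar f\ge f$ and the comparison is admissible), which yields $\lim_{r\to 1^-}\partial_r\underline u\le \partial_r v_2(1^-)=0<K^{1/n}$, i.e., the slope jump condition \eqref{eq:under} guaranteeing convexity of the glue. Once you insert such an annulus barrier (or adopt the paper's $v_1,v_2$ device verbatim), your construction goes through and the rest of your argument — locality of the Monge--Amp\`ere measure so that the glued functions satisfy the required measure inequalities, the trapping of $u_R$, compactness, identification of $A^*=I$ and $\ell^*=0$ via \eqref{eq:c-3''} from \cite{BLZ}, and uniqueness by comparison — coincides with the proof given in Section 3 of the paper, which itself follows \cite{CL} and \cite{JX2}. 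A cosmetic difference: the paper first assumes $\ud\nu=f\,\ud x$ with $f$ smooth and positive and recovers the general $\nu$ by approximation, whereas you work with $\nu$ directly in the inner Dirichlet problem; either is fine.
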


\begin{rem}
The condition $\beta>2$ is necessary for the asymptotic behavior \eqref{eq:c-3''} and \eqref{eq:c-3'}. Let $f$ be a radial, smooth, positive function satisfying $f(r)\equiv1$ for $r\in[0,1]$ and $f(r)=1+r^{-2}$ for $r>2$. Then
$$u(x)=\int_0^{|x|}\Big(\int_0^s2tf(t)\,\ud t\Big)^{\frac12}\,\ud s$$
is a solution of \eqref{eq:ma-2} with $\ud \nu=f\,\ud x$ in $\R^n$.  But, as $|x|\to \infty$,
\begin{align*}
u(x)=\begin{cases}
\frac12{|x|}^2+O((\log|x|)^2)& \quad \mbox{for }n=2,\\[2mm]
\frac12|x|^2+O(\log|x|) & \quad \mbox{for }n\ge 3.
\end{cases}
\end{align*}

\end{rem}

In 1955, J\"orgens \cite{J2} further proved that, modulo the unimodular affine equivalence, every smooth locally convex solution of
\[
\det \nabla^2 u=1 \quad \mbox{in }\R^2\setminus \{0\}
 \]
 has to be
 \be \label{eq:uniq}
 \int_0^{|x|}(c+t^2)^{1/2}\,\ud t, \quad c\ge 0.
 \ee
In 2016, Jin-Xiong \cite{JX2} extended J\"orgens theorem to all dimensions, i.e,  \[
\int_0^{|x|}(c+t^n)^{1/n}\,\ud t, \quad c\ge 0
\] is the unique solution of $\det \nabla^2 u=1$ in $\R^n\setminus \{0\}$, $n\ge 3$,   upon the unimodular affine equivalence.
Furthermore, they identified the set of local convex entire solutions with $k\ge 1$ singular points to an orbifold of dimension
$d(n,k)$, where
  \[%\be\label{eq:orbifold dim}
d(n,k)=
\begin{cases}
k-1+\frac{(k-1)k}{2},\quad\mbox{if}\quad k-1\le n,\\
k-1+\frac{n(n+1)}{2}+(k-1-n)n,\quad\mbox{if}\quad k-1> n
\end{cases}
\]
when $n\ge 3$.
The later result in dimension two was obtained by G\'alvez-Mart\'inez-Mira \cite{GMM}, using a complex analysis method.
Jin-Xiong's proof is based on the result which they proved: If $u$ is a locally convex solution of
$$\det \nabla^2u=1~~{\rm{in}}~\R^n\backslash\{P_1,\cdots,P_k\},$$
then there exist nonnegative  constants $c_i$ such that
\[
\det \nabla^2 u=1+\sum_{i=1}^k c_i\delta_{P_i}
\]
in the Alexsandrov sense, where $P_i$, $i=1,\cdots, k$, are distinct points, and $\delta_{P_i}$ is the Dirac measure centered at $P_i$. This result holds for all $n\ge 2$. Together with the asymptotic behavior at infinity, we have all the parameters to determinate the dimensions of the orbifolds. It remains to show existence.  \cite{JX2} proved existence when $n\ge 3$. Theorem \ref{thm:main} applies here to obtain existence in dimension two.

%Moreover, we can obtain a slightly more general result.

%\begin{cor} Let $n\ge2$ and $\nu$ be as above. For any given $c_1,\cdots,c_k\in(0,\infty)$, $P_1,\cdots,P_k\in\R^n$, $k\in\mathbb{N}$. And for any linear function $\ell$ and $A\in\mathcal{A}$, there exists a unique Alexandrov solution $u$ to
%\be \label{eq:main000}
%\det \nabla^2 u=\nu+\sum_{i=1}^kc_i\delta_{P_i} \quad \mbox{in }\R^n,
%\ee
%such that $c_i=|\partial u(P_i)|$, and \eqref{eq:c-3''} holds for $n\ge3$, \eqref{eq:c-3'} holds for $n=2$ with
%$$d=\frac{1}{2\pi}\Big[\lim_{R\to \infty} \Big(\int_{B_R}\,\ud \nu -\pi R^2\Big)+\sum_{i=1}^kc_i\Big].$$
%\end{cor}

Finally, we would like to mention a further extension of the theorem of J\"orgens-Calabi-Pogorelov. In another paper \cite{CL-1}, Caffarelli-Li classified entire solutions of Monge-Amp\`ere equations with periodic functions on the right hand side. See also the recent work of Teixeira-Zhang \cite{TZ}.

The paper is organized as follows. Theorem \ref{thm:main} is proved in the next section. Using the arguments of \cite{CL} and \cite{JX2}, we give a proof of Theorem \ref{meah} in section 3.

\medskip

\noindent{\bf Acknowledgement:} All authors are supported in part by the key project NSFC 11631002, and J.Xiong is also supported in part by NSFC 11501034 and 11571019.

\section{Proof of Theorem \ref{thm:main}}

For convenience, we recall the definition of Alexsandrov solutions, see e.g., Gutierrez \cite{G} and Figalli \cite{F}.
Let $\Omega$ be an open subset of $\R^n$ and $u:~\Omega\to\R$ be a locally convex function. The normal mapping of $u$, or subdifferential of $u$, at $x_0\in\Omega$ is the set-valued function $\partial u:~\Omega\to\mathcal{P}(\R^n)$ defined by
$$\partial u(x_0)=\{p:~u(x)\ge u(x_0)+p\cdot(x-x_0),~{\rm{for~all}}~x\in\Omega\},$$
where $\mathcal{P}(\R^n)$ denotes the class of all subsets of $\R^n$. Given $E\subset\Omega$, define $\partial u(E)=\cup_{x\in E}\partial u(x)$. One can show that the class
$$S=\{E\subset\Omega:\partial u(E)~{\text{is Lebesgue measurable}}\}$$
is a Borel $\sigma$-algebra.
The set function $Mu:S\to\overline{\R}$ defined by
$$Mu(E)=|\partial u(E)|$$
 is called the Monge-Amp\`ere measure associated with the function $u$, where $|\cdot|$ is the $n$-dimensional Lebesgue measure.  For a Borel measure $\nu$ in $\Omega$, we say a locally convex function $u$ is an Alexsandrov solution of the Monge-Amp\`ere equation
$$\det\nabla^2u=\nu$$
if the Monge-Amp\`ere measure $Mu$ equals $\nu$.

Now we start to prove Theorem \ref{thm:main}.
\begin{proof}[Proof of Theorem \ref{thm:main}] We only need to prove the existence part as the uniqueness part follows from the comparison principle. By the affine invariance, we can assume that $A$ is the identity matrix $I$ and $\ell=0$.

Take $\rho>0$ such that $\om \subset B_\rho$. Let
\begin{align*}
\underline{f}(r)=\begin{cases}
0,& \quad r<\rho,\\
 \min_{x\in \partial B_r} f(x) , & \quad r\ge \rho,
 \end{cases}
\end{align*}
$$\quad \underline d=  \frac{1}{2\pi}\int_{\R^2} (\underline f-1)\,\ud x=\int_0^\infty r(\underline f(r)-1)\,\ud r,$$
and
\[
w_c(r)= \int_{0}^r \Big(\int_0^s 2 t \underline f(t)\,\ud t+2c\Big)^{1/2}\,\ud s,
\]
where $c\ge 0$. It is easy to check that $w_c$ is a convex solution of
\be\label{eq:w-c}
\det \nabla^2w_c=|\partial w_c(0)|\delta_0+\underline{f}=2\pi c\delta_0+\underline{f} \quad \mbox{in }\R^2.
\ee
Using the condition \eqref{eq:c-5} on $f$, by a direct calculation we have
\begin{align*}
&\Big(\int_0^s 2 t \underline f(t)\,\ud t+2c\Big)^{1/2}\\
=&\Big(s^2+2c+\int_0^{\infty} 2 t (\underline f(t)-1)\,\ud t-\int_s^{\infty} 2 t (\underline f(t)-1)\,\ud t\Big)^{1/2}\\
=&\Big(s^2+2(c+\underline{d})+O(s^{2-\beta})\Big)^{1/2}\\
=&s\Big(1+2(\underline{d}+c)s^{-2}+O(s^{-\beta})\Big)^{1/2}\\
=&s\Big(1+(\underline{d}+c)s^{-2}+O(s^{-\min\{\beta,4\}})\Big)\\
=&s+(\underline{d}+c)s^{-1}+O(s^{-\min\{\beta-1,3\}}) \quad \mbox{as }s\to\infty.
\end{align*}
Thus,
\begin{align*}
h(s):=\Big(\int_0^s 2 t \underline f(t)\,\ud t+2c\Big)^{1/2}-s-(\underline{d}+c)s^{-1}=O(s^{-\min\{\beta-1,3\}})
\end{align*}
as $s\to\infty$. It follows that
\begin{align}\label{eq:ab}
w_c(r)=& \int_{1}^r \Big(\int_0^s 2 t \underline f(t)\,\ud t+2c\Big)^{1/2}\,\ud s+\int_{0}^1 \Big(\int_0^s 2 t \underline f(t)\,\ud t+2c\Big)^{1/2}\,\ud s\nonumber \\
=&\frac12r^2+(\underline{d}+c)\ln r-\frac12+\int_{1}^{\infty} h(s)\,\ud s-\int_{r}^{\infty} h(s)\,\ud s \nonumber\\
&+\int_{0}^1 \Big(\int_0^s 2 t \underline f(t)\,\ud t+2c\Big)^{1/2}\,\ud s \nonumber \\
=&\frac12r^2+(\underline{d}+c)\ln r+O(1) \quad \mbox{as }r\to\infty.
\end{align}

Let
$$\bar c=d-\ul d.$$
By \eqref{eq:w-c},
\[
\det \nabla^2 w_{\bar c}= 2\pi \bar c \delta_0 + \underline f \quad \mbox{in }\R^2.
\]
Since 
$$d=\frac{1}{2\pi}(\int_\Omega\,\ud \nu-\int_\Omega\,\ud x+\int_{\R^2\backslash\Omega}(f-1)\,\ud x),$$
and
$$\underline d=\frac{1}{2\pi}(-\int_\Omega\,\ud x+\int_{\R^2\backslash\Omega}(\underline f-1)\,\ud x),$$
we have
\begin{align*}
\bar c&= \frac{1}{2\pi}(\int_\Omega\,\ud \nu+\int_{\R^2\backslash\Omega}(f-\underline{f})\,\ud x).
\end{align*}
For any large $R>\rho$, choose $\lda_{\bar c}(R)$ such that
\[
w_{\bar c}(R) +\lda_{\bar c}(R)= \frac{R^2}{2}+ d \ln R.
\]
By \eqref{eq:ab}, $\lda_{\bar c}(R)$ is uniformly bounded in $R>\rho$.

Let $u_R\in C(\bar B_R)$ be the unique Alexandrov solution of
\begin{align}\label{di}
\begin{cases}
\det \nabla^2 u_R=\nu \quad & \mbox{in }B_R,\\
u_R=\frac{R^2}{2}+d \ln R \quad & \mbox{on }\partial B_R;
\end{cases}
\end{align}
see Theorem 1.6.2 in \cite{G}.

We claim that $ u_R(0)\ge\lda_{\bar c}(R)$.

Indeed, for any large $R$ and any $c>\bar c$, let $\lda_c(R)\in \R$ such that
\[
w_{ c}(R) +\lda_c(R)= \frac{R^2}{2}+ d \ln R.
\]
If $u_R(0)\le\lda_c(R)$, then, considering that for any Borel set $E\subset B_R\backslash\{0\}$,
$$|\partial u_R(E)|=\int_E\,\ud \nu\ge\int_E\underline{f}\,\ud x=|\partial(w_c+\lda_c(R))(E)|,$$
it follows from the comparison principle that $u_R(x) \le w_{ c}(| x|) +\lda_c(R)$ for all  $x\in B_R$. By Lemma 1.4.1 in \cite{G}, we have $\partial (w_{c} +\lda_c(R))(B_R)\subset \partial u_R(B_R)$. However, note that when $c>\bar c$,
\begin{align*}
|\partial (w_{c} +\lda_c(R))(B_R)|&=\int_{B_R}\underline{f}\,\ud x+2\pi c\\
&>\int_\Omega\,\ud \nu+\int_{B_R\backslash\Omega}f\,\ud x+\int_{\R^2\backslash B_R}(f-\ul f)\,\ud x\\
&\ge\int_{B_R}\,\ud \nu=| \partial u_R(B_R)|.
\end{align*}
Hence, we have derived a contradiction. It follows that $u_R(0)>\lda_c(R)$. For any fixed $R$, since $w_c(R)$ is continuous with respect to $c$,  $\lda_c(R)$ is continuous with respect to $c$. Sending $c\to \bar c$, we have
\begin{align} \label{eq:b}
u_R(0)\ge\lda_{\bar c}(R)
\end{align}
for any large $R$.

Let $v_R(x)= w_{\bar c}(|x|) +u_R(0)$. By \eqref{eq:b}, we have
$$v_R \ge w_{\bar c}(R) +\lda_{\bar c}(R)= u_R \quad \mbox{on }\partial B_R,$$
and $v_R(0)= u_R(0)$.  By the comparison principle, we have
\begin{align} \label{eq:a}
v_R\ge u_R \quad \mbox{in }B_R.
\end{align}

Since $u_R$ is a convex function, there exists a vector $p_R(0)$ such that
\[
 u_R(x)\ge p_R(0)x +u_R(0) \quad \mbox{for all }x\in B_R.
\]
By \eqref{eq:a}, we have
\begin{align*}
p_R(0)x\le w_{\bar c}(|x|)+u_R(0)-u_R(0)\le w_{\bar c}(|x|).
\end{align*}
It follows that $|p_R(0)|\le C$ for some constant $C$ independent of $R$.

Let $\tilde{u}_R(x)= u_R(x)- (p_R(0)x +u_R(0))$. Note that
\begin{align*}
0&\le\tilde{u}_R(x)\le v_{R}(x) - (p_{R}(0)x +u_{R}(0))\\
&=w_{\bar c}(|x|)+ u_R(0)- p_{R}(0)x-u_{R}(0)\\
&\le w_{\bar c}(|x|)+C|x|
\end{align*}
and
\[
\det \nabla^2  \tilde{u}_R  = \nu \quad \mbox{in }B_{R}.
\]
By the Lipschitz estimates for convex function (see, e.g.,  Theorem 6.7 in \cite{EG}), for any $K\subset\subset B_{R/2}$,
$$||\tilde{u}_R||_{C^{0,1}(K)}\le C(K),$$
where $C(K)$ is a constant independent of $R$. Then after passing to subsequence, denoted by $\tilde{u}_{R_i}$,  we have
\[
\tilde{u}_{R_i} \to u_\infty \quad \mbox{in }C_{loc}^\al(\R^2)
\]
where $\alpha\in (0,1)$ for some convex function $u_\infty$ satisfying
\begin{align} \label{eq:2}
0\le u_\infty(x) \le w_{\bar c}(|x|)+C|x|= \frac{1}{2}|x|^2+C|x|+d\ln |x|+O(1)
\end{align}
 and
\[
\det \nabla^2  u_\infty =\nu \quad \mbox{in } \R^2
\]
in the Alexandrov sense. It follows from Corollary 1.1 in \cite{BLZ} that there exist $A\in\mathcal{A}$, $D\in\R$ and a linear function $\ell(x)$ such that
\be\label{eq:lll}
\limsup_{|x|\to\infty}|x|^{j+\sigma}|\nabla^j(u_\infty(x)-(\frac12x'Ax+D\ln\sqrt{x'Ax}+\ell(x)))|<\infty,
\ee
for $j=0,1,2,3,4$, and $\sigma\in(0,\min\{\beta-2,2\})$. By \eqref{eq:2}, $A$ can not have one eigenvalue greater than $1$. This forces that all the eigenvalues equal $1$ and thus $A=I$.

To prove $D=d$, we use the method of proving (1.9) in \cite{CL}. Let $u=u_\infty-\ell$. We first assume that $u\in C^3(\R^2)$ and write
$$E(x)=u(x)-(\frac12|x|^2+D\ln|x|),$$
and
$$\det\nabla^2u=\partial_1(u_1u_{22})-\partial_2(u_1u_{12}).$$
By \eqref{eq:lll}, as $|x|\to\infty$,
$$|E(x)|=O(|x|^{-\sigma}),~|DE(x)|=O(|x|^{-\sigma-1}),~|D^2E(x)|=O(|x|^{-\sigma-2}).$$
Integrating the equation of $u$ on $B_R$ and integrating by parts, we have, as $R\to\infty$,
\begin{align*}
\int_{B_R}\,\ud\nu&=\int_{B_R}\partial_1(u_1u_{22})-\partial_2(u_1u_{12})\,\ud x\\
&=\int_{|x|=R}\Big[u_1u_{22}\frac{x_1}{|x|}-u_1u_{12}\frac{x_2}{|x|}\Big]\,\ud x\\
&=\int_{|x|=R}\Big[(x_1+\frac{Dx_1}{|x|^2}+E_1)(1+D\frac{|x|^2-2x_2^2}{|x|^4}+E_{22})\frac{x_1}{|x|}\\
&~~~~~~~~~~~~~~~~-(x_1+\frac{Dx_1}{|x|^2}+E_1)(-2D\frac{x_1x_2}{|x|^4}+E_{12})\frac{x_2}{|x|}\Big]\,\ud x\\
&=\int_{|x|=R}(x_1+\frac{Dx_1}{|x|^2})(\frac{x_1}{|x|}+\frac{Dx_1}{|x|^3})\,\ud x+O(R^{-\sigma})\\
&=\int_{|x|=R}(\frac{x_1^2}{|x|}+\frac{2Dx_1^2}{|x|^3})\,\ud x+O(R^{-\sigma})\\
&=\pi R^2+2\pi D+O(R^{-\sigma}),
\end{align*}
where  $E_i=\partial_i E$ and $E_{ij}=\partial_{ij}^2E$ for $i,j=1,2$. 
Sending $R$ to infinity, we have $D=d$.

For $u\in C(\R^2)$, by \eqref{eq:lll} we know that $u$ is of $C^4$ near $\partial B_R$ for large $R$. Let $u_\epsilon\in C^\infty(\R^2)$ be a family of convex function such that $u_\epsilon\to u$ in $C_{loc}^0(\R^2)$, and $u_\epsilon\to u$ in $C^4$ near $\partial B_R$ as $\epsilon\to 0$. Let $\eta$ be a continuous cutoff function satisfying $\eta=1$ in $B_R$, and $\eta=0$ in $\R^2\backslash B_{R+1}$. By Lemma 1.2.3 in \cite{G},
$$\lim_{\epsilon\to0}\int_{\R^2}\eta\det\nabla^2u_\epsilon\,\ud x=\int_{\R^2}\eta\,\ud\nu.$$
Note that
$$\lim_{\epsilon\to0}\int_{B_{R+1}\backslash B_R}\eta\det\nabla^2u_\epsilon\,\ud x=\int_{B_{R+1}\backslash B_R}\eta\,\ud\nu.$$
Subtracting the two equalities above, we have
$$\lim_{\epsilon\to0}\int_{B_R}\det\nabla^2u_\epsilon\,\ud x=\int_{B_R}\,\ud\nu.$$
As shown above,
$$\int_{B_R}\det\nabla^2u_\epsilon\,\ud x=\int_{|x|=R}\Big[u_{\epsilon1}u_{\epsilon22}\frac{x_1}{|x|}-u_{\epsilon1}u_{\epsilon12}\frac{x_2}{|x|}\Big]\,\ud x.$$
Sending $\epsilon\to0$, we have
\begin{align*}\int_{B_R}\,\ud\nu&=\int_{|x|=R}\Big[u_1u_{22}\frac{x_1}{|x|}-u_1u_{12}\frac{x_2}{|x|}\Big]\,\ud x\\&
=\pi R^2+2\pi D+O(R^{-\sigma}).
\end{align*}
Sending $R$ to infinity, again we have $D=d$. Then $u$ is the solution we want.

Therefore, Theorem \ref{thm:main} is proved.
\end{proof}

\section{Proof of Theorem \ref{meah}}
\label{sec:3}

When $f\equiv1$ outside $\om$, Theorem \ref{meah} was proved by \cite{JX2}.

We only show the existence part as the uniqueness part follows from the comparison principle. Due to the affine invariance, we  assume that $A=I$, $\ell=0$ and $\om\subset B_{\frac12}$.  We assume $ \ud \nu=f\ud x$ in $\R^n$ and $f\in C^\infty(\R^n)$ is positive and satisfies \eqref{eq:c-5}. The bounds we will obtain are independent of the smoothness and the lower bound of $f$ in $B_{1/2}$. By an approximation argument, Theorem \ref{meah} will follow.

Next we are going to construct sub- and super- solutions by following the arguments in \cite{CL} and \cite{JX2}.

Let $\eta$ be a nonnegative smooth function supported in $B_{\frac14}$ satisfying $\int_{B_1}\eta\,\ud x=1$, and $v_1$ be the smooth solution of
$$\begin{cases}
\det\nabla^2 v_1=f+a\eta& \quad \mbox{in }B_1,\\
v_1=0& \quad \mbox{on } \partial B_1,
\end{cases}$$
where $a>0$ will be chosen later. It follows from Alexandrov's maximum principle (see, e.g., Theorem 1.4.2 in \cite{G}) that
$$v_1\ge -c(n)|\partial v_1(B_1)|^{\frac 1n}=-c(n)\Big(\int_{B_1}f(x)\,\ud x+a\Big)^{\frac 1n}=:-c_0 \quad \mbox{in }B_{\frac12},$$
where $c(n)$ is a constant depending only on the dimension $n$.

Let $r=|x|$ and define
$$\bar f(r)=\max_{|x|=r}f(x),\quad r\ge\frac12.$$
Let $c_1=\int_{\frac12}^1(\int_1^snt^{n-1}\bar f(t)\,\ud t)^{\frac 1n}\,\ud s$, $K=\frac{c_0}{c_1}$,
$$v_2(r)=
\begin{cases}
K\int_1^r(\int_1^snt^{n-1}\bar f(t)\,\ud t)^{\frac 1n}\,\ud s,& \quad r\ge\frac12,\\
-c_0,& \quad 0\le r<\frac12.
\end{cases}$$
First of all, $v_1\ge v_2$ in $\bar B_{\frac12}$. Secondly, by  choosing $a$ large such that $c_0\ge c_1$, we have
$$\det \nabla^2v_2={K}^n\bar f\ge f=\det\nabla^2 v_1 \quad \mbox{in } B_1\setminus \bar B_{\frac12} ,$$
and $v_1=v_2=0$ on $\partial B_1$. By the comparison principle, we have  $v_1\ge v_2$ in $B_1\setminus \bar B_{\frac12} $. So $v_1\ge v_2$ in $B_1$.

Let
$$\underline u(x)=
\begin{cases}
\int_1^r(\int_1^snt^{n-1}\bar f(t)\,\ud t+K)^{\frac 1n}\,\ud s,~~r\ge1,\\
v_1,~~~~~~~~~~~~~~~~~~~~~~~~~~~~~~~~~~~~~~~~~0\le r<1.
\end{cases}$$
Then $\underline u\in C^0(\R^n)\cap C^\infty(B_1)\cap C^\infty(\R^n\setminus \bar B_1)$, $\underline u$ is locally convex in $\R^n\backslash B_1$,
\begin{align*}
&\det\nabla^2\underline u=\bar f~~{\rm{in}}~\R^n\backslash\overline{B_1},\\
&\det\nabla^2\underline u\ge f~~{\rm{in}}~B_1.
\end{align*}
Moreover, we have $\underline u\ge v_2$ in $B_1$, and $\underline u=v_2$ on $\partial B_1$, then
$$\lim_{r\to1^-}\partial_r\underline u\le\lim_{r\to1^-}\partial_r v_2.$$
Since
$$\lim_{r\to1^-}\partial_r v_2=0<(K)^{\frac 1n}=\lim_{r\to1^+}\partial_r\underline u,$$
we have
\begin{equation}\label{eq:under}
\lim_{r\to1^-}\partial_r\underline u<\lim_{r\to1^+}\partial_r\underline u.
\end{equation}
It follows that $\underline u$ is convex in $\R^n$.
By a simple computation,
$$\sup_{\R^n}\Big|\ul u(x)-\frac12|x|^2\Big|\le C$$
for some $C>0$ depending  only on $n$, $\int_{B_1}f(x)\,\ud x$ and $\bar f$ outside $B_{1/2}$.

Define
$$\underline f(r)=\min_{|x|=r}f(x),\quad r\ge\frac12,$$
and
$$\bar u(x)=
\begin{cases}
\int_1^{|x|}(\int_1^snt^{n-1}\underline{f}(t)\,\ud s)^{\frac 1n}\,\ud s,~~|x|>1,\\
0,~~~~~~~~~~~~~~~~~~~~~~~~~~~~~~~~~~~~~~~~~~|x|\le1.
\end{cases}$$
It follows that
\be\label{eq:over}
\lim_{r\to1^-}\partial_r\bar u=\lim_{r\to1^+}\partial_r\bar u=0,
\ee
and
$$\sup_{\R^n}\Big|\bar u(x)-\frac12|x|^2\Big|<+\infty.$$

By the above construction, we have
$$\beta_+:=\sup_{\R^n}(\frac{|x|^2}{2}-\bar u(x))<+\infty~~{\rm{and}}~~\beta_-:=\inf_{\R^n}(\frac{|x|^2}{2}-\underline u(x))>-\infty$$
which depend only on  $n$, $\int_{B_1}f(x)\,\ud x$ and $f$ outside $B_{1/2}$.

For $R>1$, let $u_R$ be the unique convex smooth solution of
$$\begin{cases}
\det\nabla^2 u_R=f& \quad \mbox{in }B_R,\\
u_R=\frac{R^2}{2}&\quad \mbox{on } \partial B_R.
\end{cases}$$
We claim that
\begin{equation}\label{eq:lob}
\underline u(x)+\beta_-\le u_R(x)\le \bar u(x)+\beta_+, \quad x\in B_R.
\end{equation}

To establish the first inequality, let $\ul x$ be a maximum point of the function
$$\ul h(x):=\underline u(x)+\beta_--u_R(x)$$
in $\bar B_R $. Since
$$\det\nabla^2\ul u\ge\det\nabla^2u_R \quad \mbox{in }B_R\setminus \bar B_1$$
and
$$\det\nabla^2\ul u\ge\det\nabla^2u_R~~{\rm{in}}~B_1,$$
we have, by the strong maximum principle, $\ul x\in \partial B_R$ or $\ul x\in\partial B_1$.  If $\ul x\in\partial B_R$, then by the definition of $\beta_-$,
$$\ul h(x)\le\underline u(\ul x)+\beta_--u_R(\ul x)\le\frac{|\ul x|^2}{2}-\frac{R^2}{2}=0 \quad \mbox{in }\bar B_R$$
and the inequality holds. If $\ul x\in \partial B_1$, then considering the smoothness of $u_R$, it contradicts to the condition \eqref{eq:under}. Hence, the first inequality of \eqref{eq:lob} holds. For the second inequality, let $\bar x$ be a minimum point of the function
$$\bar h(x):=\bar u(x)+\beta_+-u_R(x)$$
in $\ol{B_R}$. Similar to the above, $\bar x\in\partial B_R$ or $\bar x \in\partial B_1$. If $\bar x\in\partial B_R$, then by the definition of $\beta_+$,
$$\bar h(x)\ge\bar u(\bar x)+\beta_+-u_R(\bar x)\ge\frac{|\bar x|^2}{2}-\frac{R^2}{2}=0 \quad \mbox{in }\bar B_R $$
and the inequality holds. If $\bar x\in\partial B_1$, in view of \eqref{eq:over} and the equation $u_R$ satisfies, this is impossible. Then the inequality \eqref{eq:lob} holds.

By \eqref{eq:lob} and the Lipschitz estimate for convex functions (see Theorem 6.7 in \cite{EG}), we have, along a subsequence $R_i\to\infty$,
$$u_{R_i}\to u_\infty \quad \mbox{in }C^\al_{loc}(\R^n),$$
where $0<\al<1$, $u_\infty$ satisfies $\det \nabla^2 u_\infty=f $ in $\R^n$ in the Alexandrov sense and
\[
\underline u(x)+\beta_-\le u_\infty (x)\le \bar u(x)+\beta_+ \quad \mbox{in }\R^n,
\]
which particularly implies that
\be\label{eq:in}
\sup_{\R^n}\Big|u_\infty(x)-\frac12|x|^2\Big|\le C
\ee
for some $C>0$ depending  only on $n$, $\int_{B_1}f(x)\,\ud x$ and $f$ outside $B_{1/2}$. By Bao-Li-Zhang \cite{BLZ}, there exist $A\in\mathcal{A}$ and a linear function $\ell(x)$ such that \eqref{eq:c-3''} holds for $j=0,1,2,3,4$. Considering \eqref{eq:in}, we have $A=I$ and $\ell=\tilde{c}$ for some constant $\tilde{c}$. Then
\[
u=u_\infty-\tilde{c}
\]
is the solution we want.

Therefore, we complete the proof of Theorem \ref{meah}.

\small

\bigskip

\noindent J. Bao, J. Xiong \& Z. Zhou

\medskip

\noindent  School of Mathematical Sciences, Beijing Normal University\\
Laboratory of Mathematics and Complex Systems, Ministry of Education\\
Beijing 100875, China \\[1mm]
Email: \textsf{jgbao@bnu.edu.cn, jx@bnu.edu.cn, zhouziwei@mail.bnu.edu.cn}

\end{document}